\newtheorem{proposition}{Proposition}
\newtheorem{definition}{Definition}
\newtheorem{theorem}{Theorem}
\newtheorem{remark}{Remark}
\newtheorem{corollary}{Corollary}
\newcommand{\R}{\mathbb{R}}
\newcommand{\norm}[1]{\left\Vert\,#1\right\Vert}
\newcommand{\abs}[1]{\left\vert\,#1\right\vert}
\title{Nilpotent Jacobians and Almost Global Stability}
\author[Casta\~neda]{\'Alvaro casta\~neda}
\author[Machado--Higuera]{Maximiliano Machado--Higuera}
\address{Universidad de Chile, Departamento de Matem\'aticas. Casilla 653, Santiago, Chile}
\address{Universidad de Ibagu\'e, Facultad de Ciencias Naturales y Matem\'aticas, Ibagu\'e, Colombia}
\email{castaneda@uchile.cl, maximiliano.machado@unibague.edu.co }
\subjclass[2010]{}
\keywords{14R15, 37C10, 37C75}
\thanks{This research has been partially supported by FONDECYT Regular 1170968.  The second author also thanks the Universidad de Ibagué for its partial support with the project 18-543-INT}
\date{\today}
\begin{document}

\maketitle

\begin{abstract}
In this article we study maps with nilpotent Jacobian in $\mathbb{R}^n$ distinguishing the cases when
the rows of $JH$ are linearly dependent over $\mathbb{R}$ and when they are linearly independent
over $\mathbb{R}.$ In the linearly dependent case, we
show an application of such maps on dynamical systems, in particular, we construct a large family of almost Hurwitz vector fields for which the origin is an almost global attractor. In the linearly independent case,
we show explicitly the inverse maps of the counterexamples to Generalized Dependence Problem and proving that this inverse maps also have nilpotent Jacobian with rows linearly independent over $\mathbb{R}.$
\end{abstract}

\section{Introduction}
A strong relation there exists between the global stability problem (Markus--Yamabe Problem) in \cite{MY} and the Jacobian Conjecture  since C. Olech in \cite{O} showed that the global stability problem (in dimension two) for the system
\begin{equation}
\label{nolin}
 \dot{x} = F(x)
\end{equation}
is equivalent to prove the injectivity of the map $F.$ Moreover in the remarkable works about the Jacobian Conjecture of H. Bass
\emph{et al.} \cite{Bass} and A.V. Yagzhev \cite{Yagzhev}, established that in order to show this conjecture it is sufficient to focus on maps of
the form $X + H$ where the Jacobian matrix $JH$ is nilpotent. This fact helps to A. Cima \emph{et al.} in \cite{CEGMH} to find a counterexample to Markus--Yamabe Problem in dimension larger than three, namely,
\begin{displaymath}
 F(x_1, \ldots, x_n) = (-x_1 + x_3(x_1 + x_2 x_3)^2,-x_2-(x_1 + x_2 x_3)^2, -x_3, \ldots, -x_n).
\end{displaymath}
Recall that this problem of global stability, which is true when $n \leq 2,$ (see \cite{F,Glu,Gu} for details of proof in dimension two) is asked when the system (\ref{nolin})  has to the origin as a global attractor, where $ F: \mathbb{R}^n
\to \mathbb{R}^n$ is of class $C^1$ with $ F(0) = 0 $ and for any $x \in \mathbb{R}^n$ all the eigenvalues of $JF(x)$ have negative real part.
The vector fields that have the condition of negativeness over the eigenvalues of the Jacobian matrix are called Hurwitz vector fields. If this condition of the negativeness is satisfied except for a set in $\mathbb{R}^n$ with Lebesgue measure zero, these vector fields are known as almost Hurwitz vector fields. In \cite{PR} B. Pires and R. Rabanal have studied in dimension two this kind of vector fields proving that almost Hurwitz vector fields with the origin as an hyperbolic singular point are all topologically equivalents to the radial vector field.

In the second section of this article we show a large familiy of almost Hurwitz vector fields in large dimension which are constructed by using polynomial maps with nilpotent Jacobian with rows linearly dependent over $\mathbb{R}.$ Additionally, we prove that this vector fields of above family support density functions (a formal definition will give later) and therefore the origin is an almost global attractor (global attractor except for a set of initial states with Lebesgue measure zero) for the associated system \eqref{nolin}.

In the third section, we construct a large family of examples to weak Markus--Yamabe Conjecture and Jacobian Conjecture on $\mathbb{R}^n.$ The construction of this family is based in the counterexamples to Generalized Dependence Problem given by A. van den Essen in \cite{E} for dimension $n \geq 4.$ This counterexamples are maps of the form $X + H$ with $JH$ nilpotent and rows linearly independent over $\mathbb{R}$. We show explicitly the inverse map for such maps proving that the properties of nilpotency and linear independence is preserved for the inverse maps.

\section{Almost Hurwitz vector fields}

This section is devoted to construct almost Hurwitz vector fields for dimension $n \geq 2$ in terms of nilpotent Jacobian with rows linearly dependent over $\mathbb{R}$. Recall that an almost Hurwitz field vector field it is a Hurwitz vector field except for a Lebesgue measure zero set.

In \cite[Theorem 7.2.25]{vE} is shown that nilpotent Jacobians with rows linearly dependent over $\mathbb{R}$ has the following structure.
\begin{proposition}
\label{A}
Let $H= (u_1(x_1,x_2),u_2(x_1,x_2), \ldots, u_{n-1}(x_1, \ldots, x_n), u_n(x_1, \ldots, x_n)).$ If $JH$ is nilpotent, then
 $H$ has coordinates $(H_1, \ldots, H_n) $ of the following form
$$
\begin{array}{rcl}
H_1 & = & a_2 f(a_1 x_1 + a_2 x_2) + b_1,\\
H_2 & = & -a_1 f(a_1 x_1 + a_2 x_2) + b_2,\\
H_3 & = & a_4(x_1,x_2) f(a_3(x_1,x_2)x_3 + a_4(x_1,x_2)x_4) + b_3(x_1,x_2),\\
H_4 & = & -a_3(x_1,x_2) f(a_3(x_1,x_2)x_3 + a_4(x_1,x_2)x_4) + b_4(x_1,x_2),\\
&\vdots&\\
H_{n-1} & = & a_n(x_1, \ldots,x_{n-2})f(a_{n-1}(x_1, \ldots,x_{n-2})x_{n-1}+a_n(x_1, \ldots,x_{n-2})x_n)\\ && + b_{n-1}(x_1 \ldots,x_{n-2}),\\\\
H_{n} & = & -a_{n-1}(x_1, \ldots,x_{n-2})f(a_{n-1}(x_1, \ldots,x_{n-2})x_{n-1}+a_n(x_1, \ldots,x_{n-2})x_n)\\ && + b_{n}(x_1, \ldots,x_{n-2}),
\end{array}
$$
with $a_1,a_2,b_1,b_2 \in \mathbb{R}, \, a_3,a_4,b_3,b_4 \in \mathbb{R}[x_1,x_2], \ldots, a_{n-1}, a_n, b_{n-1}, b_n \in \mathbb{R}[x_1, \ldots, x_{n-2}]$ and $f \in \mathbb{R}[t].$
\end{proposition}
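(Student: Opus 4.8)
The plan is to exploit the triangular way in which the variables enter $H$: since $H_{2k-1}$ and $H_{2k}$ involve only $x_1,\dots,x_{2k}$, the Jacobian $JH$ is block lower triangular with $2\times 2$ diagonal blocks
\[
B_k=\begin{pmatrix}\partial_{2k-1}H_{2k-1} & \partial_{2k}H_{2k-1}\\ \partial_{2k-1}H_{2k} & \partial_{2k}H_{2k}\end{pmatrix},\qquad k=1,\dots,n/2 .
\]
The characteristic polynomial of a block triangular matrix is the product of those of its diagonal blocks, so $JH$ is nilpotent if and only if every $B_k$ is nilpotent. Thus it suffices to analyze a single nilpotent $2\times 2$ block, regarding the earlier variables $x_1,\dots,x_{2k-2}$ as parameters, and then to induct on $n$, the base case $n=2$ being precisely this block analysis.

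For a fixed block write $p=H_{2k-1}$, $q=H_{2k}$, and let $\nabla$ denote the gradient in the two block variables. Nilpotency of $B_k$ means $\tr B_k=0$ and $\det B_k=0$. The vanishing determinant forces $\nabla p$ and $\nabla q$ to be proportional at every point, say $\nabla q=\lambda\,\nabla p$; equating the mixed second partials of $q$, so that $\lambda\,\nabla p$ is indeed a gradient, makes the $\lambda$-times-second-derivative terms cancel and leaves $\nabla\lambda$ parallel to $\nabla p$. Hence $\lambda$ is constant along the level sets of $p$, so $\lambda=\phi(p)$ and $q=\Psi(p)$ for one–variable functions with $\phi=\Psi'$.

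The crux is to upgrade this to $\lambda$ \emph{constant} in the block variables. Combining $q=\Psi(p)$ with $\tr B_k=0$ gives a first order PDE for $p$ whose characteristics are straight lines, along each of which $p$, and hence the slope $\phi(p)$, is constant. Because $H$ is a polynomial defined on all of $\mathbb{R}^n$, two such level lines with different slopes would necessarily intersect, forcing $p$ to take two values at one point; this contradiction rules out varying slopes, so all level lines are parallel and $\lambda$ is constant in $x_{2k-1},x_{2k}$. Writing the common level direction as $a_{2k-1}x_{2k-1}+a_{2k}x_{2k}$ then gives
\[
H_{2k-1}=a_{2k}\,f\!\left(a_{2k-1}x_{2k-1}+a_{2k}x_{2k}\right)+b_{2k-1},\qquad
H_{2k}=-a_{2k-1}\,f\!\left(a_{2k-1}x_{2k-1}+a_{2k}x_{2k}\right)+b_{2k},
\]
where the proportionality data $a_{2k-1},a_{2k}$ and the integration terms $b_{2k-1},b_{2k}$ depend only on the parameters $x_1,\dots,x_{2k-2}$, their polynomiality in those parameters being inherited from that of $H$. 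This globalization step, together with keeping track of the polynomial parameter dependence, is the main obstacle; everything else is formal.

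Finally, the upper left $(n-2)\times(n-2)$ corner of $JH$ is the Jacobian of $(H_1,\dots,H_{n-2})$ viewed as a map of $x_1,\dots,x_{n-2}$, which is again block lower triangular and nilpotent by the same factorization. The inductive hypothesis therefore applies to it and assembles the stated normal form for all blocks.
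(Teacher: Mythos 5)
First, a point of reference: the paper does not prove this proposition at all --- it is quoted from van den Essen's book (Theorem 7.2.25) and used as a black box --- so there is no in-paper proof to compare against. Your architecture is nonetheless the right one and is essentially how the cited theorem is proved: the variable pattern makes $JH$ block lower triangular, nilpotency of $JH$ is equivalent to nilpotency of each $2\times 2$ diagonal block, and one then applies the classical two-variable structure theorem blockwise, treating $x_1,\dots,x_{2k-2}$ as parameters. The global ``straight characteristics must be parallel on $\mathbb{R}^2$'' step is also the standard mechanism.

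However, the two-variable step, which you correctly identify as the crux, is only sketched and has genuine gaps. (i) The passage from $\det B_k=0$ to $\nabla q=\lambda\nabla p$ and then to $q=\Psi(p)$ is not justified: $\lambda$ is undefined on the critical set of $p$ (and the whole argument collapses when $\nabla p\equiv 0$, a degenerate case that does occur and must be handled separately); ``$\lambda$ constant on level sets'' yields $\lambda=\phi(p)$ only if level sets are connected, which fails for polynomials on $\mathbb{R}^2$ (e.g.\ $p=x^2-y^2$); and two polynomials with identically vanishing Jacobian in general satisfy only $p=P(h)$, $q=Q(h)$ for a common $h$, not $q=\Psi(p)$ (think $p=h^2$, $q=h^3$). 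One must either invoke the algebraic-dependence theorem for two polynomials in two variables or run the characteristics argument directly on $h$. (ii) The polynomial dependence of $a_{2k-1},a_{2k},b_{2k-1},b_{2k}$ and of the profile on the parameters is asserted in a single clause, but it is a real issue: your construction determines the direction $(a_{2k-1},a_{2k})$ only up to a scaling that may vary with the parameters, and you must exhibit a polynomial section of this data. Indeed, with one fixed $f\in\mathbb{R}[t]$ serving all blocks and all parameter values, the stated normal form is not even attainable: $H_1=H_2=0$, $H_3=x_1(x_3+x_4)^2+x_2(x_3+x_4)^3$, $H_4=-H_3$ satisfies the hypotheses but cannot be written as $a_4(x_1,x_2)\,f(a_3x_3+a_4x_4)+b_3(x_1,x_2)$ with $f\in\mathbb{R}[t]$. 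What your method actually produces (and what the source theorem asserts) is an inner one-variable polynomial whose coefficients lie in $\mathbb{R}[x_1,\dots,x_{2k-2}]$. So the proposal is a sound outline of the standard argument, but not yet a proof.
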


The next result shows a family of almost Hurwitz vector fields which are constructed by using above classification on nilpotent Jacobians with rows linearly dependent over $\mathbb{R}$ for even dimension $n \geq 2$.

We emphasize that throughout this section we will consider $b_i \equiv 0, \, i = 1, \ldots, n$ in order to simplify the calculations.



\begin{theorem}
Let $s \geq 1$ and $f \in \mathbb{R}[T]$ such that
\begin{equation}
\label{F polynomial}
    f(T) = \displaystyle \sum_{i=0}^s A_{2i+1} T^{2i+1}  \quad \textnormal{with} \quad A_{2i+1} < 0, \, \, i=0, \ldots, s,\notag
\end{equation}
 and the polynomial
\begin{equation}
\label{R}
R(x_{n+1}) = \sum\limits_{l=1}^k d_{2l} x_{n+1}^{2l} \quad \textnormal{with} \quad d_{2l}>0, \, \,  l=1, \ldots, k.\notag
\end{equation}

Then the vector field
\begin{eqnarray}
 \label{F1}
F(x_1, \ldots, x_{n+1}) &=& (-x_2,x_1,-x_4,x_3, \ldots, -x_{n},x_{n-1}, - x_{n+1} R(x_{n+1})) \nonumber\\ &&+R(x_{n+1})(\lambda Ix + H(x), 0),
\end{eqnarray}
with $x=(x_1, \ldots, x_{n}), \lambda < 0$ and $H$ as in Proposition \textnormal{(\ref{A})},  is an almost Hurwitz vector field.
\end{theorem}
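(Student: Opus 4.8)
The plan is to compute $JF$ explicitly and to peel off its eigenvalues using two triangular structures, after which the sign hypotheses on $f$ and $R$ finish the argument. First I would note that $F_{n+1}=-x_{n+1}R(x_{n+1})$ depends on $x_{n+1}$ alone, while for $i\le n$ the variable $x_{n+1}$ enters $F_i$ only through the scalar factor $R(x_{n+1})$. Writing $x=(x_1,\dots,x_n)$, this makes $JF$ block triangular for the splitting $\mathbb{R}^n\times\mathbb{R}$,
\[
JF=\begin{pmatrix} A & v \\ 0 & c\end{pmatrix},
\]
with $A=\partial(F_1,\dots,F_n)/\partial(x_1,\dots,x_n)$, $c=\partial F_{n+1}/\partial x_{n+1}$ and $v$ an irrelevant column, so that the eigenvalues of $JF$ are those of $A$ together with $c$. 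It then suffices to control $A$ and $c$ separately.

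For the scalar I would compute $c=-\bigl(R(x_{n+1})+x_{n+1}R'(x_{n+1})\bigr)=-\sum_{l=1}^k(2l+1)d_{2l}x_{n+1}^{2l}$, which, since each $d_{2l}>0$, is $\le 0$ and vanishes exactly on $\{x_{n+1}=0\}$. For $A$ I would write $A=J_0+R(x_{n+1})\,(\lambda I+JH)$, where $J_0$ is block diagonal with $n/2$ rotation blocks $\left(\begin{smallmatrix}0&-1\\1&0\end{smallmatrix}\right)$ coming from the linear part. The structural input is Proposition \ref{A}: since $H_{2k-1},H_{2k}$ depend only on $x_1,\dots,x_{2k}$, the matrix $JH$ is block triangular relative to the pairing $\{(2k-1,2k)\}$; as $J_0$ and $\lambda I$ are block diagonal, $A$ is block triangular too, and its eigenvalues are the union of the spectra of its diagonal $2\times2$ blocks.

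The heart of the proof is the eigenvalues of the $k$-th diagonal block. From the form of $H$, the diagonal block of $JH$ is $f'(a_{2k-1}x_{2k-1}+a_{2k}x_{2k})\,M_k$ with $M_k=\left(\begin{smallmatrix}a_{2k-1}a_{2k}&a_{2k}^2\\-a_{2k-1}^2&-a_{2k-1}a_{2k}\end{smallmatrix}\right)$ (trace and determinant zero, as it must be). Adding the rotation block and $R\lambda I$ and computing the characteristic polynomial, I expect the discriminant to collapse so that the eigenvalues are $R\lambda\pm\sqrt{\,Rf'(a_{2k-1}^2+a_{2k}^2)-1\,}$. The sign hypotheses now do the work: since each $A_{2i+1}<0$ one has $f'(T)=\sum_{i=0}^s(2i+1)A_{2i+1}T^{2i}\le A_1<0$ for every $T$, and $R\ge0$, so the radicand is $\le-1<0$ and the block has complex conjugate eigenvalues with real part exactly $R\lambda$. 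Because $\lambda<0$ and $R(x_{n+1})>0$ whenever $x_{n+1}\neq0$, every eigenvalue of $A$---and likewise $c$---has strictly negative real part off the hyperplane $\{x_{n+1}=0\}$.

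Assembling these facts, on the complement of $\{x_{n+1}=0\}$, a set of Lebesgue measure zero in $\mathbb{R}^{n+1}$, all eigenvalues of $JF$ have negative real part, so $F$ is almost Hurwitz. I expect the only real obstacle to be the $2\times2$ computation: checking that the rotation entries $-1,1$ and the nilpotent term $Rf'M_k$ combine to give exactly the radicand $Rf'(a_{2k-1}^2+a_{2k}^2)-1$, pinning the real part at $R\lambda$; the rest is bookkeeping with the triangular structure.
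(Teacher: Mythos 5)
Your proposal is correct and follows essentially the same route as the paper: the paper's proof simply asserts the eigenvalues $\beta_{n+1}=-(R+x_{n+1}R')$ and $\lambda R\pm\sqrt{-1+(a_{2j-1}^2+a_{2j}^2)Rf'}$ and concludes from the signs of $\lambda$, $R$ and $f'$, which is exactly what your block-triangular decomposition and the $2\times 2$ discriminant computation $p^2+qr=-1+Rf'(a_{2j-1}^2+a_{2j}^2)$ establish. Your write-up supplies the justification (the triangular structure coming from Proposition \ref{A} and the collapse of the discriminant) that the paper leaves implicit, so there is nothing to object to.
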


\begin{proof}
The eigenvalues of Jacobian matrix $JF$ are $\beta_{n+1} = -(R(x_{n+1}) + x_{n+1} R'(x_{n+1})),$ and
$$
\begin{array}{rcl}
\beta_{1}^{\pm} &=& \lambda R(x_{n+1}) \pm \sqrt{-1 +(a_1^2 + a_{2}^2)R(x_{n+1})f'(a_1 x_1 + a_2 x_2)},\\\\
\beta_{2j-1}&= &\lambda R(x_{n+1}) + \sqrt{-1 +(a_{j-1}^2 + a_{j}^2)R(x_{n+1})f'(a_{j-1} x_{j-1} + a_j x_j)}, \\\\
\beta_{2j}&= &\lambda R(x_{n+1}) - \sqrt{-1 +(a_{j-1}^2 + a_{j}^2)R(x_{n+1})f'(a_{j-1} x_{j-1} + a_j x_j)},
\end{array}
$$
with $a_{2j-1} = a_{2j-1}(x_{j-1},x_j)$ and $a_{2j} = a_{2j}(x_{j-1},x_j)$  for $j=2, \ldots, n/2.$

Therefore, for $ x_{n+1} \neq 0 $ (resp. $ x_{n+1} = 0 $),  we have $ \beta_{n+1} < 0 $ (resp. $ \beta_{n+1} = 0 $), and $\beta_1^{\pm}, \beta_{2j-1} $ and  $ \beta_{2j} $ have negative real part (resp. null real part).
Thus, $F$ verifies the Hurwitz condition except in the invariant plane $x_{n+1}=0.$
\end{proof}

In order to state the main result of this section, we recall the concept of density function (dual of a Lyapunov function)  introduced by A. Rantzer in \cite{R}.


\begin{definition}
\label{density}
A density function of \textnormal{(\ref{nolin})} is a $C^{1}$ map
$\rho\colon \mathbb{R}^{d}\setminus \{0\}\to [0,+\infty)$, integrable outside a ball
centered at the origin that satisfies
\begin{equation}
\label{gradiente positivo}
[\triangledown \cdot \rho F](x) >0\notag
\end{equation}
almost everywhere with respect to $\mathbb{R}^{d},$ where
$$
\triangledown \cdot [\rho F] = \triangledown \rho \cdot F + \rho[\triangledown\cdot F],
$$
and $\triangledown \rho$, $\triangledown \cdot F$ denote respectively the gradient of $\rho$ and the divergence
of $F$.
\end{definition}

The result of A. Rantzer \cite[Theorem 1]{R} establishes a relation between density functions and almost global stability.

\begin{proposition}
\label{Rantzer}
Given the differential system
$$\dot{x} = F(x),$$
where the map $F \in C^1(\mathbb{R}^{d})$ and $ F(0) = 0.$ Suppose there exists a density function $\rho : \mathbb{R}^d \setminus \{0\} \to [0, +\infty)$ such
that $ \rho (x) F(x) / \norm{x}$   is integrable on $\{x \in \mathbb{R}^d: \norm{x} \geq 1\}.$
Then, almost all trajectories converge to the origin, i.e., the
origin is almost globally stable.
\end{proposition}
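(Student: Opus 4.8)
The plan is to prove the statement via the classical duality between the density function and the transport of Lebesgue measure along the flow, following Rantzer's strategy. Write $\phi_t$ for the (local) flow of $\dot x = F(x)$; since $F\in C^1$ each $\phi_t$ is a diffeomorphism onto its image whose Jacobian obeys Liouville's formula $\frac{d}{dt}\det D\phi_t(x) = [\triangledown\cdot F](\phi_t(x))\,\det D\phi_t(x)$. The first step is to combine this with the change of variables $y=\phi_t(x)$ to obtain, for every measurable set $Z$ of finite $\rho$-measure, the transport identity
\begin{equation}
\frac{d}{dt}\int_{\phi_t(Z)}\rho\,dy = \int_{\phi_t(Z)}[\triangledown\cdot(\rho F)]\,dy,\notag
\end{equation}
whose right-hand side is strictly positive whenever $\phi_t(Z)$ has positive Lebesgue measure, by the hypothesis $\triangledown\cdot(\rho F)>0$ almost everywhere. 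This identity is the engine of the argument: the $\rho$-weighted volume of a flowed set is strictly increasing.

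Second, I would argue by contradiction. Assume the set $\mathcal B=\{x:\phi_t(x)\not\to 0\}$ has positive Lebesgue measure. Since $\mathcal B=\bigcup_k\{x:\limsup_t\norm{\phi_t(x)}\ge 1/k\}$, some $\epsilon>0$ yields a positive-measure family of trajectories that remain, in a suitable tail sense, outside the ball $B_\epsilon(0)$. The measure-theoretic heart is to distill from this a \emph{forward-invariant} set $W\subseteq\{\norm{x}\ge\epsilon\}$ of positive Lebesgue measure and finite $\rho$-measure; finiteness holds because $\{\norm{x}\ge\epsilon\}$ splits into the compact annulus $\{\epsilon\le\norm{x}\le 1\}$, on which the continuous $\rho$ is bounded, and the exterior $\{\norm{x}\ge 1\}$, on which $\rho$ is integrable by hypothesis. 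With such a $W$ the contradiction is immediate: forward-invariance gives $\phi_t(W)\subseteq W$, hence $\int_{\phi_t(W)}\rho\le\int_W\rho$ for all $t\ge 0$, whereas the transport identity forces $\int_{\phi_t(W)}\rho>\int_W\rho$ for $t>0$. Thus every such $W$ is Lebesgue-null; as each of the countably many sets $\{\norm{x}\ge 1/k\text{-tail}\}$ is then null, their union $\mathcal B$ is null.

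Two points will demand care, and I expect them to be the main obstacles. The first is rigorously justifying the transport identity when $W$ is unbounded: applying the divergence theorem on $B_R\setminus B_\epsilon$ leaves a flux term $\int_{\partial B_R}\rho\,F\cdot n\,dS$, and this is exactly where the hypothesis that $\rho F/\norm{x}$ be integrable on $\{\norm{x}\ge 1\}$ enters. Since $\int_1^\infty\big(\tfrac1R\int_{\partial B_R}\rho\abs{F}\,dS\big)\,dR<\infty$, one can select $R_j\to\infty$ along which $\int_{\partial B_{R_j}}\rho\abs{F}\,dS\to 0$, so the flux at infinity drops out and the identity survives on the unbounded region. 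The second obstacle is extracting the forward-invariant $W$ from merely non-convergent trajectories, which may oscillate in and out of $B_\epsilon$; handling this cleanly requires replacing the tail set by the set of points whose \emph{entire} forward orbit avoids $B_\epsilon$ (or passing to the forward-orbit hull and controlling its recurrent part), together with the fact that each $\phi_t$ preserves Lebesgue-positivity, so that every $\phi_t(W)$ genuinely carries positive measure. Once these two technical steps are secured, the strictly-increasing-versus-capped contradiction closes the proof.
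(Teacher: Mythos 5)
First, a point of reference: the paper gives no proof of this proposition --- it is quoted verbatim as Theorem 1 of Rantzer \cite{R} and used as a black box --- so the only meaningful comparison is with Rantzer's original argument. Your overall strategy is the right one: the Liouville transport identity $\frac{d}{dt}\int_{\phi_t(Z)}\rho\,dy=\int_{\phi_t(Z)}[\triangledown\cdot(\rho F)]\,dy$ is indeed the engine of that proof, and you have correctly located where the hypothesis that $\rho F/\norm{x}$ be integrable on $\{\norm{x}\ge 1\}$ must enter (extracting radii $R_j\to\infty$ along which the flux $\int_{\partial B_{R_j}}\rho\abs{F}\,dS$ vanishes).

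Nevertheless there are genuine gaps, concentrated exactly at the two places you flag, and the second is not merely technical. (i) Your contradiction requires a forward-invariant set $W\subseteq\{\norm{x}\ge\epsilon\}$ of positive Lebesgue measure and finite $\rho$-measure. The only forward-invariant candidate you name --- the set of points whose entire forward orbit avoids $B_\epsilon$ --- can be empty (hence null) even when $\mathcal B$ has positive measure, since a positive-measure family of non-convergent trajectories may all oscillate in and out of $B_\epsilon$; ``passing to the forward-orbit hull and controlling its recurrent part'' is a placeholder, not an argument. The known repair does not produce an invariant set at all: one integrates the transport identity in time to get $\int_0^{\infty}\int_{\phi_\tau(Z)}[\triangledown\cdot(\rho F)]\,dy\,d\tau\le\int_{\{\norm{x}\ge\epsilon/2\}}\rho<\infty$, and plays this against the fact that a trajectory with $\limsup_t\norm{\phi_t(x)}\ge\epsilon$ spends infinite total time in $\{\norm{y}\ge\epsilon/2\}$ (each crossing of the compact annulus $\{\epsilon/2\le\norm{y}\le\epsilon\}$ takes time at least $\epsilon/(2M)$ with $M=\sup_{\textnormal{annulus}}\abs{F}$), via a Fubini argument over initial conditions and time. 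Without some such mechanism your ``strictly-increasing-versus-capped'' contradiction never gets off the ground. (ii) You tacitly assume global forward existence. Since $F$ is only $C^1$, solutions may blow up in finite time, and the conclusion ``almost all trajectories converge to the origin'' includes the assertion that the blow-up set is Lebesgue-null; this is a second, independent use of the integrability of $\rho F/\norm{x}$ at infinity that the proposal does not address. As written, the proposal is a correct plan with the two hardest steps left open.
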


The following theorem shows that the system (\ref{nolin}) with $F$ as in (\ref{F1}) under suitable conditions support a density function.





\begin{theorem}
\label{100}
Let consider $F$ as in \textnormal{(\ref{F1})} with
\begin{displaymath}
   a_{2j-1}=\pm a_{2j}, \,  j = 1, \ldots, n/2 \quad \quad \textnormal{and} \quad \quad A_{2i+1} =0, \, i=1, \ldots, s.
\end{displaymath}
Then the function
\begin{equation}
\label{densidad}
\rho(x_1, \ldots, x_{n+1}) = \frac{1}{(x_1^2 + \cdots +  x_n^2 + R(x_{n+1}))^{\alpha}}
\end{equation}
is a density function of the system  \eqref{nolin}
where
\begin{equation}
\label{condicionalfa}
\alpha > \max \left \{2, \frac{3- n \lambda}{2}, \frac{2k + 1 - n \lambda}{2 (a_{2j-1}^2 A_1 - \lambda)} \right \} \, \, \textnormal{with} \, \, a_{2j-1}^2 < \lambda / A_1,  \, \, j = 1, \ldots, n/2.
\end{equation}

\end{theorem}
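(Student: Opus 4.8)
The plan is to verify directly that $\rho$ meets the requirements of Definition \ref{density} together with the integrability hypothesis of Proposition \ref{Rantzer}: that $\rho$ is a nonnegative $C^{1}$ map, integrable outside a ball, that $[\nabla\cdot(\rho F)](x)>0$ almost everywhere, and that $\rho(x)F(x)/\norm{x}$ is integrable on $\{\norm{x}\ge 1\}$. The standing hypotheses force $f(T)=A_{1}T$ (since $A_{2i+1}=0$ for $i\ge 1$), so $H$ is \emph{linear} in each block $(x_{2j-1},x_{2j})$, and the assumption $a_{2j-1}=\pm a_{2j}$ will produce the decisive cancellations. First I would record the elementary facts about $R$: writing $Q:=x_{n+1}R'(x_{n+1})=\sum_{l}2l\,d_{2l}x_{n+1}^{2l}$, one has $R\ge 0$ and $2R\le Q\le 2kR$, with $R=Q=0$ exactly on the hyperplane $x_{n+1}=0$.

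Next I would compute the two ingredients of $\nabla\cdot(\rho F)=\nabla\rho\cdot F+\rho\,\nabla\cdot F$. For the divergence, the diagonal entries of the $H$-blocks cancel in pairs, $\partial_{x_{2j-1}}H_{2j-1}+\partial_{x_{2j}}H_{2j}=A_{1}a_{2j-1}a_{2j}-A_{1}a_{2j-1}a_{2j}=0$, so each rotated block contributes only $2\lambda R$ and the last coordinate contributes $-(R+Q)$; hence $\nabla\cdot F=(n\lambda-1)R-Q$. For the gradient term, with $\nabla\rho=-\alpha S^{-\alpha-1}(2x_{1},\dots,2x_{n},R')$ and $S=\sum_{i=1}^{n}x_{i}^{2}+R$, the skew rotation terms drop out of $\sum 2x_{i}F_{i}$, and $a_{2j-1}=\pm a_{2j}$ annihilates the mixed $x_{2j-1}x_{2j}$ terms in $x_{2j-1}H_{2j-1}+x_{2j}H_{2j}$, leaving the purely diagonal expression $\epsilon_{j}A_{1}a_{2j-1}^{2}(x_{2j-1}^{2}-x_{2j}^{2})$ with $\epsilon_{j}=\pm1$. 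Collecting everything over $S^{-\alpha-1}$ gives $\nabla\cdot(\rho F)=S^{-\alpha-1}\mathcal{B}$ with $P:=\sum_{i=1}^{n}x_{i}^{2}$ and
\[
\mathcal{B}=(n\lambda-1-2\alpha\lambda)RP+(n\lambda-1)R^{2}+(\alpha-1)RQ-PQ-2\alpha A_{1}R\sum_{j=1}^{n/2}\epsilon_{j}a_{2j-1}^{2}(x_{2j-1}^{2}-x_{2j}^{2}).
\]

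Since $S^{-\alpha-1}>0$ off the origin, the core of the proof is to show $\mathcal{B}>0$ for $x_{n+1}\ne 0$ (the set $x_{n+1}=0$ being Lebesgue-null, which is why the conclusion is only \emph{almost everywhere}). I would split $\mathcal{B}$ into a vertical part $(n\lambda-1)R^{2}+(\alpha-1)RQ$ and a horizontal part carrying the $P$-quadratic terms. For the vertical part, $Q\ge 2R$ and $\alpha>1$ give $(n\lambda-1)R^{2}+(\alpha-1)RQ\ge(n\lambda+2\alpha-3)R^{2}$, strictly positive by $\alpha>(3-n\lambda)/2$. The horizontal part is a sum over $j$ of diagonal quadratic forms in $(x_{2j-1},x_{2j})$; using $-PQ\ge -2kRP$ via $Q\le 2kR$, the smaller of the two coefficients equals $R\big[n\lambda-1-2k+2\alpha(a_{2j-1}^{2}A_{1}-\lambda)\big]$, which is nonnegative precisely by the third lower bound on $\alpha$ and the assumption $a_{2j-1}^{2}<\lambda/A_{1}$ (equivalently $a_{2j-1}^{2}A_{1}-\lambda>0$).

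The main obstacle is exactly the indefinite sign of the $H$-contribution $-2\alpha A_{1}R\sum_{j}\epsilon_{j}a_{2j-1}^{2}(x_{2j-1}^{2}-x_{2j}^{2})$: it cannot be controlled globally, only absorbed block by block into the dissipative term $\lambda P$, which is what forces the blockwise bound on $a_{2j-1}^{2}$ and the careful propagation of the constants $2k$ and $3$ through the inequalities $2R\le Q\le 2kR$. The integrability statements are then comparatively routine: integrating first in $(x_{1},\dots,x_{n})$ gives $\int(P+R)^{-\alpha}\,dx_{1}\cdots dx_{n}\sim R^{\,n/2-\alpha}$ once $\alpha>n/2$, and the degree-$2k$ growth of $R$ in $x_{n+1}$ furnishes convergence at infinity; the estimate $\norm{F}=O\big((1+R)\norm{x}+\abs{x_{n+1}}R\big)$ together with $\alpha>2$ yields the integrability of $\rho F/\norm{x}$ on $\{\norm{x}\ge 1\}$. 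An application of Proposition \ref{Rantzer} then completes the proof.
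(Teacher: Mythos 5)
Your argument is correct and is essentially the paper's own proof: your $\mathcal{B}$ is precisely the bracketed expression in the paper's expansion of $\nabla\cdot(\rho F)$, and estimating $x_{n+1}R'(x_{n+1})$ via $2R\le x_{n+1}R'\le 2kR$ reproduces exactly the paper's two families of inequalities (worst cases $l=k$ and $l=1$), hence the same lower bounds on $\alpha$. The only differences are cosmetic bookkeeping and the fact that you additionally verify the Rantzer integrability of $\rho F/\norm{x}$, which the paper defers to the subsequent corollary.
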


\begin{proof}
Throughout this proof, we will consider the function
$$
\mathcal{P}(x_1, \ldots, x_{n+1}) = \mathcal{P} := \frac{1}{(x_1^2+ \cdots +x_n^2 + R(x_{n+1}))}.
$$

Since we have the condition $a_{2j-1}= a_{2j}, \, j=1, \ldots, n/2$ (the case $a_{2j-1} = -a_{2j}$ is similar) and $A_{2i+1} =0, \, i=1, \ldots, n,$ the vector field $F$ becomes
\begin{eqnarray}
\label{newF}
 \label{F-cor}
F(x_1, \ldots, x_{n+1}) &=& (-x_2,x_1,-x_4,x_3, \ldots, -x_{n},x_{n-1},- x_{n+1} R(x_{n+1})) \nonumber \\
&&+R(x_{n+1})\left \{(\lambda + a_1^2 A_1)x_1 + a_1^2 A_1 x_2, -a_1^2 A_1 x_1 + (\lambda - a_1^2 A_1)x_2 \right \} \nonumber\\
&&+R(x_{n+1})\left \{(\lambda + a_3^2 A_1)x_3 + a_3^2 A_1 x_4, -a_3^2 A_1 x_3 + (\lambda - a_3^2 A_1)x_4 \right \} \nonumber\\
&& \hspace{3 cm} \vdots \\
&& + R(x_{n+1})\left \{(\lambda + a_{n-1}^2 A_1)x_{n-1} + a_{n-1}^2 A_1 x_n \right \} \nonumber\\
&& + R(x_{n+1})\left \{-a_{n-1}^2 A_1 x_{n-1} + (\lambda - a_{n-1}^2 A_1)x_n \right \} \nonumber.
\end{eqnarray}
Notice that $\lambda + a_{2j-1}^2 A_1 < 0,$ and $\lambda - a_{2j-1}^2 A_1 < 0, \, j=1, \ldots, n/2.$
Now, we will prove that the function $\rho$ of
(\ref{densidad}) with $\alpha$ as in (\ref{condicionalfa})
 is a density function for the system \eqref{nolin}. Indeed,
the condition  $\alpha > 2$ ensures the integrability  of $ \rho $ outside the ball centered at the origin of radius one.

It remains to prove that $\nabla \cdot (\rho F)(x_1, \ldots, x_{n+1})$ is positive almost everywhere in $\mathbb{R}^{n+1}.$ We have
$$ \triangledown \rho (x_1, \ldots, x_{n+1}) = -\alpha \mathcal{P}^{\alpha + 1} \; (2 x_1, \ldots, 2 x_{n},  R'(x_{n+1})),
$$
and
$$ [\triangledown\cdot F] (x_1, \ldots, x_{n+1}) = (n\lambda-1) R(x_{n+1}) - x_{n+1} R'(x_{n+1}) \, . $$
Then
\begin{eqnarray*}
[\nabla \cdot \rho F] & = & (\triangledown \rho \cdot F) (x_1, \ldots, x_{n+1}) + \rho(x_1, \ldots, x_{n+1}) \; [\triangledown\cdot F] (x_1, \ldots, x_{n+1}) \\
& = &-\alpha \, R(x_{n+1})\mathcal{P}^{\alpha +1} \, \Bigl \{2x_1((\lambda + a_1^2 A_1)x_1 + a_1^2 A_1 x_2)\\
&& \hspace{4.5 cm}+ 2x_2 (-a_1^2 A_1 x_1 + (\lambda - a_1^2 A_1)x_2 )\\
&& \hspace{4.5 cm}+ 2 x_3 ((\lambda + a_3^2 A_1)x_3 + a_3^2 A_1 x_4)\\
&& \hspace{4.5 cm}+ 2x_4 ( -a_3^2 A_1 x_3 + (\lambda - a_3^2 A_1)x_4)\\
&& \hspace{6.5 cm} \vdots \\
&& \hspace{3.5 cm} + 2 x_{n-1} ((\lambda + a_{n-1}^2 A_1)x_{n-1} + a_{n-1}^2 A_1 x_n)\\
&& \hspace{1.0 cm} + 2 x_n (-a_{n-1}^2 A_1 x_{n-1} + (\lambda - a_{n-1}^2 A_1)x_n) - x_{n+1} \, R'(x_{n+1}) \Bigr\} \\
&  & +\mathcal{P}^{\alpha} \, [ (n\lambda-1) R(x_{n+1}) - x_{n+1} R'(x_{n+1})] \\
& = &   \mathcal{P}^{\alpha +1} \displaystyle \Bigl \{ x_1^2 [(-2 \alpha (\lambda + a_1^2 A_1) + n \lambda -1)R(x_{n+1}) - x_{n+1} R'(x_{n+1})] \Bigr. \\
&&  \qquad  \bigl. + x_2^2 [(-2 \alpha (\lambda - a_1^2 A_1) + n \lambda -1)R(x_{n+1}) - x_{n+1} R'(x_{n+1})] \bigr. \\
&&  \qquad  \bigl. + x_3^2 [(-2 \alpha (\lambda + a_3^2 A_1) + n \lambda -1)R(x_{n+1}) - x_{n+1} R'(x_{n+1})] \bigr. \\
&&  \qquad  \bigl. + x_4^2 [(-2 \alpha (\lambda - a_3^2 A_1) + n \lambda -1)R(x_{n+1}) - x_{n+1} R'(x_{n+1})] \bigr. \\
&&  \qquad   \hspace{4.0 cm}\vdots \\
&&  \qquad  \bigl. + x_{n-1}^2 [(-2 \alpha (\lambda + a_{n-1}^2 A_1) + n \lambda -1)R(x_{n+1}) - x_{n+1} R'(x_{n+1})] \bigr. \\
&&  \qquad  \bigl. + x_n^2 [(-2 \alpha (\lambda - a_{n-1}^2 A_1) + n \lambda -1)R(x_{n+1}) - x_{n+1} R'(x_{n+1})] \bigr. \\
&&  \qquad \bigl. +R(x_{n+1}) \left[\alpha x_{n+1} R'(x_{n+1}) + (n \lambda -1) R(x_{n+1}) - x_{n+1} R'(x_{n+1}) \right ] \Bigr\}.
\end{eqnarray*}
Since $ R(x_{n+1}) = \sum\limits_{l=1}^k d_{2l} x_{n+1}^{2l}$  and $ x_{n+1} R'(x_{n+1}) = \sum\limits_{l=1}^k 2 l d_{2l} x_{n+1}^{2l}$ with $d_{2l}>0$ for  $l=1, \ldots, k $,
we obtain $ [\triangledown\cdot \rho F] (x_1, \ldots, x_{n+1}) > 0 $ for $ x_{n+1} \neq 0 $ , if for each $j=1, \ldots, n/2$ we have that
$$  -2 \alpha (\lambda - a_{2j-1}^2 A_1) + n \lambda -1-2l> 0  \,\, , l=1, \ldots, k, $$
and
$$2 l (\alpha -1) + n \lambda -1 > 0 \,\, , l=1, \ldots, k.$$
Moreover,  $ [\triangledown\cdot \rho F] (x_1, \ldots, x_{n+1}) = 0 $ for $ x_{n+1} = 0 .$
Therefore, the desired result follows from \eqref{condicionalfa}.
\end{proof}

Now, we can show the origin is an almost global attractor of system (\ref{nolin}) where $F$ is given by \eqref{F1}. In fact, we have the following corollary.

\begin{corollary}
The system \eqref{nolin} where  $F$ as in previous Theorem has the origin as an almost global attractor which is not locally asymptotic stable.
\end{corollary}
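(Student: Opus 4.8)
The plan is to prove the two assertions separately: almost global attractivity through Rantzer's criterion, and the failure of local asymptotic stability through the dynamics on the invariant plane $x_{n+1}=0$.

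For the almost global attractivity I would invoke Proposition \ref{Rantzer}. Theorem \ref{100} already provides the density function $\rho$ of \eqref{densidad} with $\alpha$ as in \eqref{condicionalfa}, so the only remaining hypothesis is the integrability of $\rho(x)F(x)/\norm{x}$ over $\{x\in\R^{n+1}:\norm{x}\geq 1\}$. Writing $w=(x_1,\ldots,x_n)$ and $y=x_{n+1}$, the standing assumption $A_{2i+1}=0$ for $i\geq 1$ makes $H$ linear, so from \eqref{newF} the first $n$ components of $F$ are bounded by a constant times $\abs{w}(1+R(y))$ while the last component equals $-yR(y)$; hence $\norm{F(x)}\leq C\,(1+R(y))(\abs{w}+\abs{y})$ on $\{\norm{x}\geq 1\}$. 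Since $\rho=(\abs{w}^2+R(y))^{-\alpha}$, I would estimate
\[
\frac{\rho(x)\norm{F(x)}}{\norm{x}}\leq C\,\frac{(1+R(y))(\abs{w}+\abs{y})}{(\abs{w}^2+R(y))^{\alpha}\,\norm{x}}
\]
and split $\{\norm{x}\geq 1\}$ into the region where $\abs{w}$ dominates and the region where $\abs{y}$ dominates. In the first region the polynomial growth of the numerator is beaten by the decay $(\abs{w}^2+R(y))^{-\alpha}$, whose exponent is large by \eqref{condicionalfa}; in the second region the high-degree term $R(y)\sim y^{2k}$ enters the denominator, and the lower bounds on $\alpha$ in \eqref{condicionalfa} (notably the thresholds involving $3-n\lambda$ and $2k+1$) force the remaining one-dimensional integral in $y$ to converge. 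Once integrability is checked, Proposition \ref{Rantzer} yields that almost every trajectory converges to the origin.

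For the failure of local asymptotic stability I would use the plane $\Pi=\{x_{n+1}=0\}$. Because the last component of $F$ is $-x_{n+1}R(x_{n+1})$ and $R(0)=0$, one has $\dot{x}_{n+1}=0$ on $\Pi$, so $\Pi$ is invariant; on $\Pi$ we have $R\equiv 0$, whence $F$ restricts to $(-x_2,x_1,-x_4,x_3,\ldots,-x_n,x_{n-1},0)$, a direct sum of planar rotations in the coordinate planes $(x_{2j-1},x_{2j})$. Each such pair satisfies $\tfrac{d}{dt}(x_{2j-1}^2+x_{2j}^2)=0$, so the orbits are concentric circles that never approach the origin. Consequently every neighborhood of the origin contains nontrivial periodic orbits lying in $\Pi$, and the origin cannot be locally asymptotically stable; this matches the first theorem of this section, where the Hurwitz condition failed exactly on $\Pi$.

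The step I expect to be the main obstacle is the integrability verification in the first part: $F$ grows polynomially while $\rho$ decays only like a negative power, and the presence of the high-degree polynomial $R(x_{n+1})$ turns the estimate into a genuinely multivariate one rather than a radial computation. The purpose of the intricate lower bound \eqref{condicionalfa} on $\alpha$ is exactly to ensure that in every direction of $\R^{n+1}$ the decay of $\rho$ dominates the growth of $F/\norm{x}$, so that the integral over $\{\norm{x}\geq 1\}$ is finite.
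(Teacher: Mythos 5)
Your proposal is correct and follows essentially the same route as the paper: Theorem \ref{100} supplies the density function, the integrability of $\rho F/\norm{x}$ outside the unit ball is reduced to the decay of negative powers of $x_1^2+\cdots+x_n^2+R(x_{n+1})$ coming from the condition on $\alpha$ in \eqref{condicionalfa}, and the failure of local asymptotic stability comes from the restriction of $F$ to the invariant plane $x_{n+1}=0$. The only cosmetic differences are that the paper verifies integrability by an explicit expansion and bound of $\norm{F}^2\rho^2/\norm{(x_1,\ldots,x_{n+1})}^2$ rather than your region-splitting, and that your remark that the restricted dynamics consists of concentric periodic orbits accumulating at the origin spells out the non-stability claim that the paper only asserts in one line.
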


\begin{proof}
By Theorem \ref{100}, the function $\rho(x_1, \ldots, x_{n+1})$ from (\ref{densidad}) with $\alpha$ as in (\ref{condicionalfa}) is a density function for the system (\ref{nolin}) with $F$ as in \eqref{F1} if
\begin{displaymath}
   a_{2j-1}=\pm a_{2j}, \,  j = 1, \ldots, n/2 \quad \quad \textnormal{and} \quad \quad A_{2i+1} =0, \, i=1, \ldots, n,
\end{displaymath}
thus, the vector field $F$ becomes \eqref{newF}.

 We have $ F(x_1, \ldots, x_n ,0) = (-x_2, x_1, \ldots, -x_n, x_{n-1} 0) $, then the origin is not locally asymptotically stable.

 On the other hand, to prove that the origin is almost global attractor we use  Rantzer's result (Theorem \ref{Rantzer}). Then it is sufficient to show that the  condition  $\alpha > 2$ ensures the integrability  of
 \begin{equation}
 \label{1000}
  \rho(x_1, \ldots, x_{n+1}) F(x_1, \ldots, x_{n+1})/ \norm{(x_1, \ldots, x_{n+1})}
 \end{equation}
   outside the ball centered at the origin of radius one.

 In fact, if we consider constants
$M_0 = \max \{1,d_2\} $, and  for $j=1, \ldots, n/2,$
$$ M_{2j-1} = \max\{1, (\lambda-a_{2j-1}^2 A_1)^2\} \, \, \textnormal{and} \, \, N_{2j-1} = -2 A_1 a_{2j-1}^2 ,$$ we have  that

\begin{eqnarray*}
\norm{F}^2 & = & x_1^2 + \cdots + x_n^2 \\
&&+ R^2(x_{n+1}) \left \{ [(\lambda + a_1^2 A_1)^2 + a_1^4 A_1^2] x_1^2 \right. \\ && \left. +  [(\lambda - a_1^2 A_1)^2 + a_1^4 A_1^2]x_2 ^2 + 4  a_1^4 A_1^2 x_1 x_2 \right. \\
&& \left. + \cdots +  [(\lambda + a_{n-1}^2 A_1)^2 + a_{n-1}^4 A_1^2] x_{n-1}^2  \right. \\&& \left.+  [(\lambda - a_{n-1}^2 A_1)^2 + a_{n-1}^4 A_1^2]x_n^2 + 4  a_{n-1}^4 A_1^2 x_{n-1} x_n + x_{n+1}^2\right \}\\
&& - 2 A_1 R(x_{n+1}) \left \{a_1^2 (x_1 + x_2)^2 + a_3^2(x_3 + x_4)^2 + \cdots+ a_{n-1}^2 (x_{n-1} + x_n)^2 \right \}\\\\
& \leq &   x_1^2 + \cdots + x_n^2 \\
&&+R^2(x_{n+1}) \Bigl \{ [(\lambda - a_1^2 A_1)^2 + a_1^4 A_1^2] (x_1^2 + x_2^2)  + 2  a_1^4 A_1^2 (x_1^2 +  x_2^2)  + \cdots + \Bigr. \\
&& \Bigl.+  [(\lambda - a_{n-1}^2 A_1)^2 + a_{n-1}^4 A_1^2] (x_{n-1}^2 + x_n^2) + 2  a_{n-1}^4 A_1^2 (x_{n-1}^2 +x_n^2) + x_{n+1}^2\Bigr \}\\
&& - 2 A_1 R(x_{n+1}) \left \{a_1^2 (x_1 + x_2)^2 + a_3^2(x_3 + x_4)^2 + \cdots+ a_{n-1}^2 (x_{n-1} + x_n)^2 \right \}\\\\
& \leq &   x_1^2 + \cdots + x_n^2 + x_{n+1}^2\\
&&+  \sum_{j=1}^{n/2} M_{2j-1}(x_1^2 + \cdots + x_{n}^2 + R(x_{n+1}))^2 (x_1^2 + \cdots+ x_{n+1}^2)\\
&& + \sum_{j=1}^{n/2} N_{2j-1} (x_1^2 + \cdots + x_{n}^2 + R(x_{n+1}))(x_1^2 + \cdots+ x_{n+1}^2) .
\end{eqnarray*}
and that $x_1^2 + \cdots +  x_{n}^2 + R(x_{n+1}) \geq M_0.$
Thus, this facts combined with the assumption over $\alpha$ imply that

\begin{eqnarray*}
\frac{\norm{F}^2 \rho^2}{\norm{(x_1, \ldots, x_{n+1})}^2} & \leq &  \frac{1}{(x_1^2+ \cdots + x_n^2 + R(x_{n+1}))^{2 \alpha}}\\
&&+ \sum_{j=1}^{n/2} \frac{M_{2j-1}}{(x^2+ \cdots +x_n^2 + R(x_{n+1}))^{2 \alpha-2}}  \\
& & + \sum_{j=1}^{n/2}\frac{N_{2j-1}}{(x_1^2+ \cdots +x_n^2 +R(x_{n+1}))^{2 \alpha-1}} \, \cdot
\end{eqnarray*}
Therefore \eqref{1000}  is integrable outside the ball centered at the origin of radius one, and result follows.
\end{proof}

\begin{remark}
This Corollary shows a large family of Hurwitz vector fields in dimension $n+1$ with the origin as an almost global attractor generalizing three dimensional results from \cite[Corollary 3.5]{CG1}.
\end{remark}

\section{Injectivity}

This section is devoted to show a large family of example to the next two conjectures:

\noindent{\bf{Jacobian Conjecture on $\mathbb{R}^n.$}}  Every polynomial map $F: \mathbb{R}^n \to \mathbb{R}^n$ such that $\det JF \equiv 1$ is a bijective map with a polynomial inverse,
and

\noindent {\bf{Weak Markus--Yamabe Conjecture (WMYC):}} If $F:
\mathbb{R}^n \to \mathbb{R}^n$ is a  $C^1-$ Hurwitz map, then $F$ is injective.

It is known that Jacobian Conjecture is open for $n \geq 2$ and {\textbf{WMYC}} is true when $n \leq 2$ and, to the best of our knowledge, it has been proved in dimension $n \geq 3$ for $C^1$ Lipschitz Hurwitz maps by A. Fernandes \textit{et al.} in  \cite[Corollary 4]{Fernandez}.

We carry out this task of show examples to this problems determining  the inverse of the maps $F = \lambda I + H$ in dimension $n \geq 4$, where $\lambda < 0$ and
$H$ are counterexamples to Generalized Dependence Problem (see \cite[Proposition 7.1.9]{vE}.

\begin{proposition}
\label{P1}
The polynomial maps $F = \lambda I + H: \mathbb{R}^n \to \mathbb{R}^n$ with $n \geq 4, \lambda \neq 0$ and $H=(H_1, \ldots, H_n)$ where
\begin{eqnarray*}
H_1(x_1,\dots,x_n) & = &  g(x_2 - a(x_1)) \, , \\
H_i(x_1,\dots,x_n) & = &  x_{i+1}  + \frac{(-1)^i}{(i-1)!} \,
a^{(i-1)}(x_1) \,
g(x_2 - a(x_1))^{i-1} \, , \, \textrm{if} \;\;\; 2 \leq i \leq n-1 \, ,\\
H_{n}(x_1,\dots,x_n) & = &  \frac{(-1)^{n}}{(n-1)!} \, a^{(n-1)}(x_1)
\,
g(x_2- a(x_1))^{n-1}
\end{eqnarray*}
where $a(x_1) \in \mathbb{R}[x_1]$ with $\deg a = n-1$ and  $ g(t) \in \mathbb{R}[t], g(0) = 0$ and
$\deg_t g(t) \geq 1,$ have polynomial inverse are examples to Jacobian Conjecture on $\mathbb{R}^n \, (\lambda = 1) $
and Weak Markus--Yamabe Conjecture $(\lambda < 0).$
\end{proposition}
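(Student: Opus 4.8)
The plan is to invert $F$ explicitly by solving the system $y = F(x)$ for $x$ in terms of $y$, and to show that every coordinate $x_i$ comes out as a polynomial in $y_1,\dots,y_n$. Writing the equations componentwise and abbreviating $v := g(x_2 - a(x_1))$, the system reads $y_1 = \lambda x_1 + v$, then $y_i = \lambda x_i + x_{i+1} + \frac{(-1)^i}{(i-1)!}\,a^{(i-1)}(x_1)\,v^{\,i-1}$ for $2 \le i \le n-1$, and $y_n = \lambda x_n + \frac{(-1)^n}{(n-1)!}\,a^{(n-1)}(x_1)\,v^{\,n-1}$. The essential observation is that this is triangular in $x_2,\dots,x_n$ once $v$ (equivalently $x_1$) is regarded as known: each equation of index $i\ge 2$ links only $x_i$ and $x_{i+1}$, while the remaining term depends solely on $x_1$ and $v$. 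Moreover, since $\deg a = n-1$, the derivative $a^{(n-1)}$ is a constant, which is exactly what makes the last equation close up.

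First I would treat $v$ as a parameter and solve this triangular block from the bottom up, obtaining $x_i = \sum_{j=i}^{n} (-1)^{j-i}\lambda^{-(j-i+1)}\bigl(y_j - c_j\bigr)$ for $2 \le i \le n$, where $c_j := \frac{(-1)^j}{(j-1)!}\,a^{(j-1)}(x_1)\,v^{\,j-1}$ is the nonlinear correction in the $j$-th equation. The decisive step is then to compute $x_2 - a(x_1)$, since that is the argument of $g$ defining $v$. Taking $i=2$ and inserting the $c_j$, the alternating, $\lambda$-weighted sum of the corrections simplifies by the exact Taylor identity $a(x_1 + t) = \sum_{k=0}^{n-1}\frac{t^{k}}{k!}\,a^{(k)}(x_1)$ (exact precisely because $\deg a = n-1$) taken at $t = v/\lambda$; the sum collapses to $a(x_1 + v/\lambda) - a(x_1)$. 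Since the first equation gives $x_1 + v/\lambda = y_1/\lambda$, the term $a(x_1)$ cancels and, remarkably, all dependence on $v$ disappears, leaving
\[
x_2 - a(x_1) = \sum_{j=2}^{n}\frac{(-1)^{j}}{\lambda^{\,j-1}}\,y_j - a\!\left(\frac{y_1}{\lambda}\right),
\]
a genuine polynomial $Q(y_1,\dots,y_n)$ in the $y$-variables alone.

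With $x_2 - a(x_1) = Q(y)$ in hand, the inversion unwinds: I would set $v = g(Q(y))$, a polynomial in $y$; then $x_1 = (y_1 - v)/\lambda$ is polynomial, hence so is every correction $c_j$, and back-substitution in the triangular formula expresses each $x_2,\dots,x_n$ as a polynomial in $y$. This exhibits $F^{-1}$ explicitly as a polynomial map, so $F$ is a polynomial automorphism. To close the two applications I would note that $JH$ is nilpotent by construction, so $JF = \lambda I + JH$ has $\det JF = \lambda^{n}$: for $\lambda = 1$ this is $\equiv 1$, placing $F$ among the maps predicted bijective by the Jacobian Conjecture, whereas for $\lambda < 0$ all eigenvalues of $JF$ equal $\lambda < 0$, so $F$ is Hurwitz and, being bijective, injective, as the Weak Markus--Yamabe Conjecture predicts.

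The only real obstacle is the middle step: recognizing that the correction terms reassemble into a Taylor polynomial of $a$ and that the relation $x_1 + v/\lambda = y_1/\lambda$ forces the $v$-dependence and the stray $a(x_1)$ to cancel, turning $x_2 - a(x_1)$ into a function of $y$ alone. Everything before and after that identity is routine triangular back-substitution.
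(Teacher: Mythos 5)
Your proposal is correct and follows essentially the same route as the paper: both recover $x_2 - a(x_1) = \sum_{j=2}^{n}\frac{(-1)^{j}}{\lambda^{j-1}}\,u_j - a(u_1/\lambda)$ as the key identity, using the exact Taylor expansion of $a$ (degree $n-1$) at $t = g(x_2-a(x_1))/\lambda$ together with $u_1/\lambda = x_1 + g(x_2-a(x_1))/\lambda$ to make the nonlinear corrections collapse, and then back-substitute through the triangular structure to get a polynomial inverse. Your organization via bottom-up elimination and your explicit closing remarks on $\det JF=\lambda^n$ and the eigenvalues being $\lambda$ are just a cleaner write-up of what the paper does.
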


\begin{proof}

Let $u_i = \lambda x_i + H_i(x_1, \ldots, x_n)$ for $1 \leq i \leq n.$ Then

\begin{equation}
\label{1}
\sum_{i=2}^n \frac{(-1)^i}{\lambda^{i-1}} u_i - a\left(\frac{1}{\lambda} u_1\right) = x_2 - a(x_1).
\end{equation}
In fact, w have that
$$
\begin{array}{rcl}
\displaystyle \sum_{i=2}^n \frac{(-1)^i}{\lambda^{i-1}} u_i &=& \displaystyle \sum_{i=2}^n \frac{(-1)^i}{\lambda^{i-1}} \big [ \lambda x_i + x_{i+1} + \frac{(-1)^i}{(i-1)!}a^{(i-1)(x_1)g(x_2 - a(x1))^{i-1}} \big ]\\\\
& = & \displaystyle\sum_{i=2}^n \frac{(-1)^i}{\lambda^{i-2}} x_i + \sum_{i=2}^n \frac{(-1)^i}{\lambda^{i-1}} x_{i+1}\\\\
& & + \displaystyle\sum_{i=2}^n \frac{(-1)^i}{(i-1)!\lambda^{i-1}}a^{(i-1)}(x_1)g(x_2 - a(x1))^{i-1}\\\\
& = & x_2 + \displaystyle \frac{(-1)^{n+1}}{\lambda^{n-2}}x_n +  \displaystyle \sum_{i=2}^n \frac{(-1)^i}{(i-1)!\lambda^{i-1}}a^{(i-1)}(x_1)g(x_2 - a(x_1))^{i-1}.
\end{array}
$$
Then we obtain
$$
\begin{array}{rcl}
\displaystyle \sum_{i=2}^n \frac{(-1)^i}{\lambda^{i-1}} u_i &=& x_2 + \displaystyle\frac{(-1)^{n+1}}{\lambda^{n-2}}x_n + \sum_{i=2}^n \frac{(-1)^i}{(i-1)!\lambda^{i-1}}a^{(i-1)}(x_1)g(x_2 - a(x1))^{i-1} \\\\
& & + \displaystyle\frac{(-1)^n}{\lambda^{n-1}} \big[ \lambda x_n + \frac{(-1)^n}{\lambda^{n-1}! } a^{(n-1)}(x_1)g(x_2 - a(x_1))^{n-1}\big ]\\\\
& = & \displaystyle x_2  + \sum_{i=2}^n \frac{1}{(i-1)!\lambda^{i-1}}a^{(i-1)}(x_1)g(x_2 - a(x_1))^{i-1} \\\\
& = & \displaystyle x_2 - a(x_1) + \sum_{i=1}^n \frac{1}{(i-1)!\lambda^{i-1}}a^{(i-1)}(x_1)g(x_2 - a(x_1))^{i-1}\\\\
& =  & \displaystyle x_2 - a(x_1)  + a\left(x_1 + \frac{1}{\lambda}g(x_2 - a(x_1))\right)\\\\
& = & \displaystyle x_2 - a(x_1) + a\left(\frac{1}{\lambda} u_1\right),
\end{array}
$$
which show (\ref{1}). Moreover, putting
$
\Psi(u) = g \Big( \sum_{i=2}^n \frac{(-1)^i}{\lambda^{i-1}} u_i - a\left(\frac{1}{\lambda} u_1\right) \Big )
$
we have that
$$
\begin{array}{rcl}
x_1 &=& \frac{1}{\lambda}(u_1 - \Psi(u)),\\\\
x_2 &=& a(x_1) - a\left(\frac{1}{\lambda}\right) + \displaystyle \sum_{i=2}^n \frac{(-1)^i}{\lambda^{i-1}} u_i,\\\\
x_3 & = & -\lambda \Big[ a(x_1) + a'(x_1) \frac{\Psi(u)}{\lambda} \Big ] + \lambda a\left(\frac{1}{\lambda} u_1\right) - \displaystyle \sum_{i=3}^n \frac{(-1)^i}{\lambda^{i-2}} u_i,\\\\
x_4 & = & \lambda^2 \Big[ a(x_1 + a'(x_1) \frac{\Psi(u)}{\lambda} + \frac{1}{2} a''(x_1) \frac{\Psi(u)^2}{\lambda^2} ) \Big] -\lambda^2 a\left(\frac{1}{\lambda} u_1\right) + \displaystyle \sum_{i=4}^n \frac{(-1)^i}{\lambda^{i-3}} u_i,
\end{array}
$$
and, in general, for $3 \leq k \leq n,$ we have

$$x_k = (-1)^k \Big[ \lambda^{k-2} \displaystyle \sum_{i=0}^{k-2} \frac{1}{i!} \frac{\Psi(u)^k}{\lambda^k} a^{i}(x_1) - \lambda^{k-2} a\left(\frac{1}{\lambda} u_1\right) +
 \displaystyle \sum_{i=k}^n \frac{(-1)^i}{\lambda^{i-k-1}} u_i  \Big ],$$

 or

 $$x_k = \frac{1}{\lambda} u_k + G_k(u_1, \ldots, u_n),$$
 with

 $$G_k(u_1, \ldots, u_n) = (-1)^k \Big[ \lambda^{k-2} \displaystyle \sum_{i=0}^{k-2} \frac{1}{i!} \frac{\Psi(u)^k}{\lambda^k} a^{i}(x_1) - \lambda^{k-2} a\left(\frac{1}{\lambda} u_1\right) +
 \displaystyle \sum_{i=k}^n \frac{(-1)^i}{\lambda^{i-k-1}} u_i  \Big]$$
 and the result follows.
\end{proof}

The following examples to the conjectures are base in a counterexample to Generalized Dependence Problem which does not belong to the family of Proposition \textnormal{\ref{P1}} in dimension $4,$ which is a generalization of the map $H$ in \cite[pp. 302]{vE}.

\begin{proposition}
\label{AAA}
Consider a polynomial map of the form
$$
\begin{array}{rcl}
H(x,y,z,w) & = & f(t)(-1,b_1 + 2 v_1 \alpha x,- \alpha f(t), \lambda (b_1+ 2 v_1 \alpha x))\\
&&+(0,\lambda(b_1 x + 2 v_1 \alpha x^2) - v_1 z + w, 0, -\lambda v_1 z)
\end{array}
$$
such that $f \in \mathbb{R}[t]$ where $t = \lambda(y + b_1 x + v_1 \alpha x^2)$ and $v_1 \alpha \neq 0$. Then $JH$ is nilpotent and the rows of $JH$ are linearly independent of $\mathbb{R}.$
Moreover, $\lambda I + H$ is a example to Weak Markus--Yamabe Conjecture (resp. Jacobian Conjecture)
with $\lambda < 0 \,  (\rm{resp.} \, \lambda = 1).$
\end{proposition}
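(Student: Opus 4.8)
The plan is to verify the three assertions in turn after writing the Jacobian $JH$ explicitly in the variables $(x,y,z,w)$. Throughout I set $t=\lambda(y+b_1x+v_1\alpha x^2)$ and $p=b_1+2v_1\alpha x$, so that $\partial_x t=\lambda p$, $\partial_y t=\lambda$ and $\partial_z t=\partial_w t=0$. Differentiating the four coordinates of $H$ produces a $4\times4$ matrix whose first and third rows are $\mathbb{R}[x,y]$-proportional, namely $\nabla H_3=2\alpha f(t)\,\nabla H_1$; this single observation drives much of what follows.

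First I would establish nilpotency by showing that the characteristic polynomial of $JH$ equals $s^4$, i.e.\ that the four elementary symmetric functions $e_1,\dots,e_4$ of its eigenvalues all vanish. The trace $e_1$ cancels at once, since the $(1,1)$ and $(2,2)$ diagonal entries are $\mp\lambda p\,f'(t)$ and the other two diagonal entries are $0$. Because rows $1$ and $3$ are proportional, $\det JH=e_4=0$, and every principal $3\times3$ minor using both of those rows vanishes, so only the minors on the index sets $\{1,2,4\}$ and $\{2,3,4\}$ contribute to $e_3$; a direct computation shows they equal $\mp2\lambda^2v_1\alpha f(t)f'(t)$ and cancel. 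The delicate coefficient is $e_2$: the surviving principal $2\times2$ minors cancel in pairs, but the cancellation of the leftover term requires that the $x$-derivative of the quadratic correction appearing in $H_2$ equal $\lambda p=\partial_x t$, the very polynomial that also appears in the $(2,4)$ minor. I expect this $e_2$ cancellation to be the main obstacle, since it is where the precise coefficients of the construction are used.

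Next I would prove that the rows $R_1,\dots,R_4$ of $JH$ are $\mathbb{R}$-linearly independent by exploiting the column structure. In a relation $\sum_i c_iR_i=0$, the fourth column (in which only $R_2$ has a nonzero, indeed constant, entry) forces $c_2=0$; the third column, whose only nonzero entries are $-v_1$ and $-\lambda v_1$ in rows $2$ and $4$, then forces $c_4=0$ because $\lambda v_1\neq0$. The relation collapses to $c_1R_1+c_3R_3=-\lambda f'(t)\,(c_1+2\alpha f(t)c_3)\,(p,1,0,0)$, and since $\lambda\neq0$, $f'(t)\not\equiv0$ (as $f$ is non-constant), and $1$ and $f(t)$ are $\mathbb{R}$-linearly independent polynomials, its second coordinate forces $c_1=c_3=0$. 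Note that the $\mathbb{R}[x,y]$-proportionality of $R_1$ and $R_3$ does not obstruct this, because the proportionality factor $2\alpha f(t)$ is not constant.

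Finally, to show that $F=\lambda I+H$ is an example to the two conjectures I would exhibit its inverse explicitly, as in Proposition \ref{P1}. Writing $u_i=\lambda x_i+H_i$, the key identity is $u_2-\lambda^{-1}u_4=t$, which recovers $t$ as an explicit polynomial in $u$; one then solves $u_1=\lambda x-f(t)$ for $x$, $u_3=\lambda z-\alpha f(t)^2$ for $z$, $u_4=\lambda w+\lambda f(t)p-\lambda v_1z$ for $w$, and $t=\lambda(y+b_1x+v_1\alpha x^2)$ for $y$, each step being polynomial. This yields a polynomial inverse, so $F$ is a polynomial automorphism. Since $JH$ is nilpotent, every eigenvalue of $JF=\lambda I+JH$ equals $\lambda$: for $\lambda=1$ this gives $\det JF\equiv1$, making $F$ an example to the Jacobian Conjecture, while for $\lambda<0$ it makes $F$ a Hurwitz map whose injectivity, confirmed by the explicit inverse, is exactly what the Weak Markus--Yamabe Conjecture predicts.
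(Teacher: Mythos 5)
Your proposal is correct and follows essentially the same route as the paper: nilpotency via vanishing of the trace, the principal $2\times2$ and $3\times3$ minors and the determinant (the paper delegates the order $2$ and $3$ minors to a computer algebra system, whereas you identify the cancellations by hand), row independence by isolating the $w$- and $z$-columns and then using the proportionality $\nabla H_3=2\alpha f(t)\nabla H_1$, and injectivity via the explicit polynomial inverse built from the key identity $t=u_2-\lambda^{-1}u_4$. Your observation that the $e_2$ cancellation forces the $x$-derivative of the quadratic correction in $H_2$ to equal $\lambda p=\partial_x t$ is exactly right, and it in fact reveals that the term $\lambda(b_1x+2v_1\alpha x^2)$ printed in the statement of $H_2$ must be read as $\lambda(b_1x+v_1\alpha x^2)$ --- which is precisely the expression the paper's own proof uses for $u_2$.
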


\begin{proof}

It is easy to see that the trace of $JH$ is zero. Furthermore, note that the determinant of $JH$ is zero due to $\abs{\partial(H_1,H_3)/\partial(x,z)} \equiv 0.$
By using a algebraic manipulator it is straightforward see that the principal minors of the order $2$ and $3$ are zero. On the other hand, to prove the linearly independence
of rows of $JH$ over $\mathbb{R}$ we write $\displaystyle\sum_{i=1}^4 \gamma_i H_i = 0$  for some $\lambda_i \in \R$ and we will show that $\gamma_i = 0, \, i=1,\ldots,4.$ In fact, it is
easy to see that $\gamma_2 = 0$ due to that $w$ only appears in $H_2$ and as a consequence we have that $\gamma_4 = 0,$ and finally $\gamma_1 = \gamma_2 = 0$ noticing $y-$degree of
$H_1$ and $H_3$ respectively.

On the other hand, we consider
$$
\begin{array}{rcl}
u_1 &=& \lambda x - f(t)\\\\
u_2 & = & \lambda y + \lambda(b_1 x + v_1 \alpha x^2) - v_1 z +(b_1 + 2 v_1 \alpha x)f(t)+w\\\\
u_3 & = & \lambda z - \alpha (f(t))^2\\\\
u_4 & = & \lambda w +\lambda(b_1+2 v_1 \alpha x)f(t)+\lambda v_1 z.
\end{array}
$$
It is easy to see that $\Phi = \lambda(y + b_1 x + v_1 \alpha x^2) = u_2 - \frac{1}{\lambda} u_4,$ thus we have that the inverse of map $F$ is $F^{-1}= \gamma(x,y,z,w) + (Q_1,Q_2,Q_3,Q_4)$ where
$$
\begin{array}{rcl}
Q_1 &=&   \gamma f(\Phi)\\\\
Q_2& = &  - \gamma (b_1 x + \gamma v_1 \alpha x^2 +\gamma w ) + \gamma (\gamma^2 (v_1 \alpha x + \frac{1}{\gamma}b_1)-2b_1-4\gamma v_1 \alpha x) f(\Phi)\\\\
    & & +\gamma^2(\frac{1}{\gamma^2}(b_1 - v1 \alpha) 2 v_1 \alpha (\gamma x - 1) ) (f(\Phi))^2 +\gamma^3 v_1 \alpha (f(\Phi))^3\\\\
Q_3 & = &  \gamma \alpha (f(\Phi))^2\\\\
Q_4 & = &  \gamma v_1 z - \gamma^2 (v_1 \alpha x + \frac{1}{\gamma} b_1) f(\Phi) \\\\
     & &  -\gamma^2 (2 v_1 \alpha x + \frac{1}{\gamma}(b_1 - v_1 \alpha)) (f(\Phi))^2  -\gamma^2 v_1 \alpha (f(\Phi))^3,
\end{array}
$$
with $\gamma = 1/\lambda.$
\end{proof}

A similar result can be obtained if we consider the map $H$ of Proposition \ref{A}. Indeed,

\begin{proposition}
The maps $\lambda I + H$ with $\lambda \neq 0$ and $H$ as in Proposition \ref{A} are examples to Weak Markus--Yamabe Conjecture (resp. Jacobian Conjecture)
with $\lambda < 0 \,  (\rm{resp.} \, \lambda = 1).$
\end{proposition}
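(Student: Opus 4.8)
The plan is to reduce the two required properties to facts we already have. Write $F = \lambda I + H$ with $H$ as in Proposition~\ref{A}, and recall that $JH$ is nilpotent. Since $\lambda I$ commutes with $JH$, the eigenvalues of $JF(x) = \lambda I + JH(x)$ are exactly $\lambda$ plus the eigenvalues of $JH(x)$; concretely, because $JH(x)$ is nilpotent its characteristic polynomial is $t^n$, so $\det\!\big(tI - JF(x)\big) = \det\!\big((t-\lambda)I - JH(x)\big) = (t-\lambda)^n$, and every eigenvalue of $JF(x)$ equals $\lambda$ at every point $x \in \mathbb{R}^n$. Consequently $\det JF \equiv \lambda^n$, so for $\lambda = 1$ we have $\det JF \equiv 1$ (the hypothesis of the Jacobian Conjecture), while for $\lambda < 0$ all eigenvalues of $JF(x)$ equal the real negative number $\lambda$, so $F$ is a Hurwitz map (the hypothesis of the Weak Markus--Yamabe Conjecture). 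It then remains only to prove that $F$ is bijective with polynomial inverse, which is what both conjectures predict.

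To exhibit the inverse I would set $u_i = \lambda x_i + H_i(x)$ for $1 \le i \le n$ and solve for $x$ in terms of $u$, exploiting the block-pair structure of Proposition~\ref{A}. For each pair of indices $(2k-1, 2k)$ the coordinates read
\begin{align*}
u_{2k-1} &= \lambda x_{2k-1} + a_{2k}\, f(\theta_k) + b_{2k-1}, \\
u_{2k} &= \lambda x_{2k} - a_{2k-1}\, f(\theta_k) + b_{2k},
\end{align*}
where $\theta_k = a_{2k-1} x_{2k-1} + a_{2k} x_{2k}$ and the coefficients $a_{2k-1}, a_{2k}, b_{2k-1}, b_{2k}$ depend only on $x_1, \ldots, x_{2k-2}$. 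The key cancellation is that the combination $a_{2k-1} u_{2k-1} + a_{2k} u_{2k}$ kills the two $f$-terms, leaving
$$
a_{2k-1} u_{2k-1} + a_{2k} u_{2k} = \lambda\, \theta_k + a_{2k-1} b_{2k-1} + a_{2k} b_{2k},
$$
so that $\theta_k$, and hence $f(\theta_k)$, is recovered as an explicit expression once the coefficients are known. Since $\lambda \neq 0$, one can then solve the two displayed equations for $x_{2k-1}$ and $x_{2k}$ individually.

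I would organize this as an induction on the block index $k$. In the base case $k = 1$ the coefficients $a_1, a_2, b_1, b_2$ are constants, so $\theta_1$, then $f(\theta_1)$, and then $x_1, x_2$ are polynomials in $u_1, u_2$ directly. For the inductive step, assuming $x_1, \ldots, x_{2k-2}$ have already been written as polynomials in $u_1, \ldots, u_{2k-2}$, the coefficients $a_{2k-1}, a_{2k}, b_{2k-1}, b_{2k} \in \mathbb{R}[x_1, \ldots, x_{2k-2}]$ become polynomials in $u_1, \ldots, u_{2k-2}$; the cancellation above then yields $\theta_k$ and $f(\theta_k)$ as polynomials in $u_1, \ldots, u_{2k}$, and finally
$$
x_{2k-1} = \tfrac{1}{\lambda}\big(u_{2k-1} - a_{2k} f(\theta_k) - b_{2k-1}\big), \qquad
x_{2k} = \tfrac{1}{\lambda}\big(u_{2k} + a_{2k-1} f(\theta_k) + b_{2k}\big)
$$
are polynomials in $u_1, \ldots, u_{2k}$. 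This both proves bijectivity of $F$ and exhibits $F^{-1}$ as a polynomial map, completing both cases.

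The step I expect to cost the most care is not any single computation but the bookkeeping of the induction: one must verify that the lower-triangular dependence built into Proposition~\ref{A} (each pair's coefficients depending only on strictly earlier variables) is exactly what makes the substitutions legitimate, so that no circularity arises. The only structural ingredient driving the argument is the antisymmetric pairing of the $f$-coefficients $(a_{2k}, -a_{2k-1})$, which forces the cancellation, together with $\lambda \neq 0$, which permits division throughout; this mirrors the explicit inversion already carried out in Proposition~\ref{P1}.
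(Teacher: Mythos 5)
Your proof is correct and follows essentially the same route as the paper: both invert $F=\lambda I+H$ explicitly by exploiting the antisymmetric pairing, which makes $a_{2k-1}u_{2k-1}+a_{2k}u_{2k}$ recover $\lambda\theta_k$ (plus $b$-terms) and hence $f(\theta_k)$. Your version is in fact more careful than the paper's, which merely displays a closed-form inverse without verifying the Hurwitz/$\det JF\equiv 1$ hypotheses and without making explicit that the coefficients $a_i,b_i$ must be evaluated at the already-recovered variables --- precisely the triangular bookkeeping your induction on the block index handles.
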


\begin{proof}
We can calculated the inverse explicitly. Indeed,
$$
\begin{array}{rcl}
F^{-1}_1 & = & \gamma x_1 - \gamma a_2 f(\gamma (a_1 x_1 + a_2 x_2)) - b_1,\\
F^{-1}_2 & = & \gamma x_2  +\gamma a_1 f(\gamma (a_1 x_1 + a_2 x_2)) - b_2,\\
F^{-1}_3 & = & \gamma x_3 -\gamma a_4(x_1,x_2) f(\gamma(a_3(x_1,x_2)x_3 + a_2(x_1,x_2)x_4)) - b_3(x_1,x_2),\\
F^{-1}_4 & = & \gamma x_4 + \gamma a_3(x_1,x_2) f(\gamma (a_3(x_1,x_2)x_3 + a_2(x_1,x_2)x_4)) - b_4(x_1,x_2),\\
&\vdots&\\
F^{-1}_{n-1} & = & \gamma x_{n-1} \\
&&- \gamma a_n(x_1, \ldots,x_{n-2})f(\gamma (a_{n-1}(x_1, \ldots,x_{n-2})x_{n-1}+a_n(x_1, \ldots, x_{n-2})x_n))\\ && - b_{n-1}(x_1 \ldots,x_{n-2})\\\\
F^{-1}_{n} & = & \gamma x_n \\
&&+ \gamma a_{n-1}(x_1, \ldots,x_{n-2})f(\gamma (a_{n-1}(x_1, \ldots,x_{n-2})x_{n-1}+a_n(x_1, \ldots,x_{n-2})x_n))\\ && - b_{n}(x_1, \ldots,x_{n-2})
\end{array}
$$
with $\gamma = 1/ \lambda.$
\end{proof}

\begin{remark}
The Proposition \textnormal{\ref{P1}} in dimension three is Theorem \textnormal{2.2} from \cite{CG1}. Additionally, in the same article, in Remark \textnormal{2.4} we give the inverse of $F$ on  explicitly way but without details as was constructed. Finally, notice that the maps $F$ in the Proposition \textnormal{\ref{P1}}  have property of that $JH$ is nilpotent with rows linearly independent over $\mathbb{R}.$
\end{remark}

Due to previous examples and remarks we have the following question:

\medskip

\noindent \textbf{Question:} If $( \lambda I+H)^{-1}= \lambda^{-1}I + \overline{H} $ then $J\overline{H}$ preserves the same properties of nilpotency and independence of rows over $\mathbb{R}$ of $JH$?

\medskip

To contextualize the answer to this question, we introduce some notations.

\medskip

Let $F = X + H$ with $N:= JH$ nilpotent, \textit{i.e} $N^{d+1} = 0.$ Let $ \lambda \in \mathbb{R} \setminus \{0\}$ Put
$$
F_{\lambda}:= \lambda X + H = \lambda X \circ (X + \lambda^{-1} H).
$$
Assume $X + \lambda^{-1} H$ is invertible with inverse $G:= X + \overline{H}.$ Then
$$
F_{\lambda}^{-1} = (X + \overline{H}) \circ (\lambda^{-1} X) = \lambda^{-1} X + \overline{H} (\lambda^{-1} X) = \lambda^{-1}(X + \lambda \overline{H} (\lambda^{-1} H)).
$$

Put $\widetilde{H}:= \lambda \overline{H}(\lambda^{-1} X).$ Then $F_{\lambda}^{-1} = \lambda^{-1} (X + \widetilde{H}).$

Now, we have the following result which gives an affirmative answer to previous question.

\begin{theorem} Under above notations
\begin{itemize}
\item[i)] $J \widetilde{H}$ is nilpotent.
\item[ii)] If the rows of $JH$ are linearly independent over $\mathbb{R},$ then the rows of $J \widetilde{H}$ are linearly independent over $\mathbb{R}.$
\end{itemize}
\end{theorem}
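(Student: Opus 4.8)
The plan is to deduce both assertions from properties of $\overline{H}$, where $X+\overline{H}=G=(X+\lambda^{-1}H)^{-1}$, and then transport them to $\widetilde{H}$ through the relation $\widetilde{H}=\lambda\,\overline{H}(\lambda^{-1}X)$. The first step is a one-line chain-rule computation: since $\widetilde{H}(x)=\lambda\,\overline{H}(\lambda^{-1}x)$, differentiating gives
$$
J\widetilde{H}(x)=\lambda\cdot J\overline{H}(\lambda^{-1}x)\cdot\lambda^{-1}=J\overline{H}(\lambda^{-1}x),
$$
so the two scalars cancel and $J\widetilde{H}$ is simply $J\overline{H}$ read along the linear substitution $x\mapsto\lambda^{-1}x$. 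Because $x\mapsto\lambda^{-1}x$ is a bijection of $\mathbb{R}^n$, this shows at once that $J\widetilde{H}$ is nilpotent if and only if $J\overline{H}$ is nilpotent, and that the rows of $J\widetilde{H}$ are linearly independent over $\mathbb{R}$ if and only if the rows of $J\overline{H}$ are. Hence it suffices to prove (i) and (ii) with $\overline{H}$ in place of $\widetilde{H}$.

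Next I would extract the basic identity for $J\overline{H}$. Set $M:=\lambda^{-1}JH$; this is nilpotent since $JH$ is, say $M^{d+1}=0$. Differentiating the relation $G\circ(X+\lambda^{-1}H)=X$ and using $JG=I+J\overline{H}$ together with $J(X+\lambda^{-1}H)=I+M$ yields
$$
J\overline{H}\big((X+\lambda^{-1}H)(x)\big)=(I+M(x))^{-1}-I=\sum_{k=1}^{d}(-1)^k M(x)^k=-M(x)\,(I+M(x))^{-1}.
$$
The middle member is a polynomial in the nilpotent matrix $M(x)$ with vanishing constant term, hence nilpotent; since $X+\lambda^{-1}H$ is surjective, $J\overline{H}(y)$ is nilpotent for every $y$. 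Combined with the reduction of the first paragraph, this gives (i).

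For (ii) I argue by contraposition, assuming the rows of $J\overline{H}$ are linearly dependent over $\mathbb{R}$, i.e. $\gamma^{T}J\overline{H}(y)=0$ for all $y$ and some $0\neq\gamma\in\mathbb{R}^n$. Evaluating at $y=(X+\lambda^{-1}H)(x)$ and inserting the identity above gives $\gamma^{T}M(x)(I+M(x))^{-1}=0$ for all $x$. Here the invertibility of $(I+M(x))^{-1}$ for each fixed $x$ is decisive: it forces $\gamma^{T}M(x)=0$, that is $\gamma^{T}JH(x)=0$ for all $x$, so the rows of $JH$ are linearly dependent over $\mathbb{R}$. Taking the contrapositive, and combining again with the first paragraph, proves (ii).

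The only genuinely delicate point is this transfer of linear (in)dependence across the inversion, and it is handled by the factorization $(I+M)^{-1}-I=-M(I+M)^{-1}$, which exhibits $J\overline{H}\circ(X+\lambda^{-1}H)$ as $JH$ (up to the scalar $\lambda^{-1}$) postmultiplied by the invertible matrix $(I+M)^{-1}$; right multiplication by an invertible matrix leaves the left kernel unchanged, so $\gamma^{T}M(I+M)^{-1}=0$ is equivalent to $\gamma^{T}M=0$. Everything else is routine: the chain-rule cancellation of $\lambda$ against $\lambda^{-1}$ that makes $J\widetilde{H}(x)=J\overline{H}(\lambda^{-1}x)$, and the fact that a polynomial in a nilpotent matrix without constant term is itself nilpotent.
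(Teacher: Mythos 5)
Your proof is correct and follows essentially the same route as the paper: reduce to $\overline{H}$ via the chain--rule cancellation $J\widetilde{H}(x)=J\overline{H}(\lambda^{-1}x)$, express $J\overline{H}$ through the finite Neumann series of the nilpotent Jacobian, and transfer the left kernel across an invertible factor. The only (cosmetic) differences are that you use the closed form $(I+M)^{-1}-I=-M(I+M)^{-1}$ instead of explicitly factoring the geometric series $\sum_{i\ge 1}(-1)^iN(G)^i=N(G)A$, and you track the harmless $\lambda^{-1}$ in $M=\lambda^{-1}JH$ a bit more carefully than the paper does.
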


\begin{proof}
\noindent
\begin{itemize}
 \item[i)] $J \widetilde{H} = \lambda J \overline{H} (\lambda ^{-1} X) \lambda^{-1} = J \overline{H} (\lambda^{-1} X).$ Furthermore $J F(G)JG = I$ and $JG = I + J \overline{H}.$ Thus,\,$I + J \overline{H} = (JF)^{-1}(G).$ Observing that $JF = I + N,$ we obtain that
$$
(JF)^{-1} = I + \displaystyle \sum_{i=1}^{d} (-1)^i N^i,
$$
which implies that
$$
I + J \overline{H} = (JF)^{-1}(G) = I + \displaystyle \sum_{i=1}^{d} (-1)^i N(G)^i.
$$
In consequence
\begin{equation}
\label{arno}
J\overline{H} = \displaystyle \sum_{i=1}^{d} (-1)^i N(G)^i \notag
\end{equation}
which is nilpotent since $N(G)^{d+1} = 0.$ Since, as seen above, $J \widetilde{H} = J \overline{H} (\lambda^{-1} X),$ statement i) follows.
\item[ii)] Assume that the rows of $J \widetilde{H}$ are linearly dependent over $\mathbb{R}.$ Then there exists $0 \neq c \in \mathbb{R}^n$ such that
$(J \widetilde{H})^{T}c = 0.$ Hence $(J \overline{H})^{T}c = 0$ due to $J \widetilde{H} = J \overline{H}(\lambda^{-1} X).$ By \eqref{arno} we have that $J \overline{H} = N(G)A,$ where
$$
A:= \displaystyle \sum_{i=1}^{d} (-1)^i N(G)^i = -I + N(G) + N(G)^2 + \cdots
$$
Since $N(G)$ is nilpotent, $A$ is invertible and hence $A^{T}$ is invertible also. Since $(J \overline{H})^{T}c = 0$ which implies that $A^T N(G)^T c = 0,$ we obtain $N(G)^T c = 0,$ \textit{i.e}
$(JH)(G)^{T}c = 0.$ Substituting $X = F$  and using $G(F) = X$ we obtain that $(JH)^T c = 0.$ Therefore, the rows of $JH$ are linearly dependent over $\mathbb{R},$ obtaining a contradiction.
\end{itemize}
\end{proof}

\section*{Acknowledgement}
  The authors acknowledge Arno van den Essen for contributing to improve this article, in particular to enhance the redaction the last theorem.

\end{document}